\documentclass[12pt,a4paper]{article}
\usepackage{amssymb}
\usepackage{amsmath}
\usepackage{amsthm}
\usepackage{indentfirst}
\usepackage{ot1patch}
\usepackage{psfrag}
\usepackage{rotate}

\newtheorem{tw}{Theorem}[section]
\newtheorem{pr}[tw]{Proposition}
\newtheorem{lm}[tw]{Lemma}
\newtheorem{cor}[tw]{Corollary}

\theoremstyle{definition}
\newtheorem{df}[tw]{Definition}
\newtheorem{ex}[tw]{Example}

\DeclareMathOperator{\jac}{jac}

\DeclareMathOperator{\Irr}{Irr}
\DeclareMathOperator{\Sqf}{Sqf}
\DeclareMathOperator{\Spec}{Spec}
\DeclareMathOperator{\Prime}{Prime}
\DeclareMathOperator{\Gpr}{Gpr}
\DeclareMathOperator{\Rdl}{Rdl}

\newcommand{\rpr}{\mbox{\small\rm rpr}}

\author{Piotr J\k{e}drzejewicz, \L ukasz Matysiak, Janusz Zieli\'nski}
%\normalsize Faculty of Mathematics and Computer Science\\
%\normalsize Nicolaus Copernicus University\\
%\normalsize Toru\'{n}, Poland
\title{On some factorial properties of subrings}
\date{}

\begin{document}

\maketitle

\begin{abstract}
We discuss various factorial properties of subrings as well as
properties involving irreducible and square-free elements,
in particular ones connected with Jacobian conditions.
\end{abstract}

\begin{table}[b]\footnotesize\hrule\vspace{1mm}
Keywords: irreducible element, square-free element,
factorization, Jacobian Conjecture.\\
2010 Mathematics Subject Classification:
Primary 13F20, Secondary 14R15.
\end{table}

\section*{Introduction}

Throughout the paper by a ring we mean a commutative ring with unity.
By a domain we mean a (commutative) ring without zero divisors.
By $R^{\ast}$ we denote the set of all invertible elements of a ring $R$.
If $R$ is a domain, then by $R_0$ we denote its field of fractions.
If elements $a,b\in R$ are associated in a ring $R$, we write $a\sim_R b$.
We write $a\mid_R b$ if $b$ is divisible by $a$ in $R$.
Furthermore, we write $a\:\rpr_R\,b$ if
$a$ and $b$ are relatively prime in $R$,
that is, have no common non-invertible divisors.
We use a sub-index indicating the ring when we compare properties
in a ring $A$ and in its subring $R$.
If $R$ is a ring, then
by $\Irr R$ we denote the set of all irreducible elements of $R$,
and by $\Sqf R$ we denote the set of all square-free elements of $R$,
where an element $a\in R$ is called square-free if it can not
be presented in the form $a=b^2c$ with $b\in R\setminus R^{\ast}$,
$c\in R$.

\medskip

Now, let $A=k[x_1,\dots,x_n]$ be the algebra of polynomials
over a field $k$ of characteristic zero.
Let $f_1,\dots,f_r\in A$ be algebraically independent over~$k$,
where $r\in\{1,\dots,n\}$.
Put $R=k[f_1,\dots,f_r]$.
By $\jac^{f_1,\dots,f_r}_{x_{i_1},\dots,x_{i_r}}$
denote the Jacobian determinant of $f_1$, $\dots$, $f_r$
with respect to $x_{i_1}$, $\dots$, $x_{i_r}$.
Recall from \cite{analogs} the following generalization of
the Jacobian Conjecture.

\medskip

\noindent
{\bf\boldmath Conjecture.}
\emph{If
$\gcd\big(\jac^{f_1,\dots,f_r}_{x_{i_1},\dots,x_{i_r}},\;
1\leqslant i_1<\ldots<i_r\leqslant n\big)\in k\setminus\{0\}$,
then $R$ is algebraically closed in $A$.}

\medskip

Note that by Nowicki's characterization the assertion means that
$R$ is a ring of constants of some $k$-derivation of $A$
(\cite{Nrings}, Theorem~5.5, \cite{Npolder}, Theorem~4.1.5,
\cite{DaigleBook}, 1.4).
Note also that case $r=1$ is true (\cite{Ayad}, Proposition 14,
see also \cite{closed}, Proposition 4.2).
Moreover, we refer the reader to van den Essen's book
\cite{EssenBook} for information on the Jacobian Conjecture.

\medskip

By a generalization of the results of \cite{keller} and \cite{BondtYan},
the above gcd condition was expressed in terms of irreducible
and square-free elements.

\medskip

\noindent
{\bf Theorem (\cite{analogs}, 2.4)}
\emph{The following conditions are equivalent:}

\smallskip

\noindent
{\rm (i)} \
$\gcd\big(\jac^{f_1,\dots,f_r}_{x_{i_1},\dots,x_{i_r}},\;
1\leqslant i_1<\ldots<i_r\leqslant n\big)\in k\setminus\{0\}$,

\smallskip

\noindent
{\rm (ii)} \
$\Irr R\subset\Sqf A$,

\smallskip

\noindent
{\rm (iii)} \
$\Sqf R\subset\Sqf A$.

\medskip

Finally recall that condition (iii) in much more general case
is equivalent to some factoriality property.

\medskip

\noindent
{\bf Theorem (\cite{analogs}, 3.4)}
\emph{Let $A$ be a unique factorization domain.
Let $R$ be a subring of $A$ such that $R^{\ast}=A^{\ast}$
and $R_0\cap A=R$.
The following conditions are equivalent:}

\smallskip

\noindent
{\rm (i)} \
$\Sqf R\subset\Sqf A$,

\smallskip

\noindent
{\rm (ii)} \
\emph{for every $x\in A$, $y\in\Sqf A$,
if $x^2y\in R\setminus\{0\}$, then $x,y\in R$.}

\medskip

A subring $R$ satisfying the above condition (ii) is called
square-factorially closed in $A$ (\cite{analogs}, Definition 3.5).
Under the assumptions of Theorem 3.4 square-factorially closed
subrings are root closed (\cite{analogs}, Theorem 3.6),
see \cite{Anderson} and \cite{BrewerCMcC} for information
on root closed subrings.

\medskip

The above theorem corresponds with the known fact that
a subring $R$ of a UFD $A$ such that $R^{\ast}=A^{\ast}$
is factorially closed in $A$ if and only if $\Irr R\subset\Irr A$.
Rings of constants of locally nilpotent derivations
in domains of characteristic zero are factorially closed
(see \cite{FreudenburgBook} and \cite{DaigleBook} for details).

\medskip

A general question stated in \cite{analogs}, when do conditions
(ii) or (iii) of Theorem 2.4 imply algebraic closedness of $R$ in $A$,
inspired us to study their relations with other conditions
of this type (see Proposition \ref{33} below).
The above Theorem 3.4 motivates us to investigate various properties
having a form of factoriality, in particular similar to (ii).

\section{Divisibility, relative primeness, etc.\ in a subring}
\label{s1}

In this section we describe relationships between various conditions
on a subring of a domain of arbitrary characteristic.

\begin{pr}
\label{11}
Let $A$ be a domain, let $R$ be a subring of $A$.
The following conditions are equivalent:

\medskip

\noindent
{\rm (i)} \
$R_0\cap A=R$,

\medskip

\noindent
{\rm (ii)} \
$R=L\cap A$ for some subfield $L\subset A_0$,

\medskip

\noindent
{\rm (iii)} \
for every $a\in R$, $b\in A$,
if $ab\in R\setminus\{0\}$, then $b\in R$,

\medskip

\noindent
{\rm (iv)} \
for every $a,b\in R$, if $a\mid_A b$, then $a\mid_R b$.
\end{pr}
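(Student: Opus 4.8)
The plan is to establish a cyclic chain of implications among the four conditions, using throughout that $R$, being a subring of the domain $A$, is itself a domain, so that its field of fractions $R_0$ is well defined and embeds in $A_0$. In particular $R_0\cap A$ is a subring of $A$ containing $R$, and the inclusion $R\subseteq R_0\cap A$ (respectively $R\subseteq L\cap A$ in (ii)) is automatic; thus in each membership statement only the reverse inclusion needs work.

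First I would dispatch the equivalence of (i) and (ii). For (i)$\Rightarrow$(ii) simply take $L=R_0$, which is a subfield of $A_0$ with $L\cap A=R_0\cap A=R$. For (ii)$\Rightarrow$(i) I would invoke the minimality of the field of fractions: since $R\subseteq L$ and $L$ is a subfield of $A_0$, it follows that $R_0\subseteq L$, whence $R_0\cap A\subseteq L\cap A=R$, which is the nontrivial inclusion.

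Next I would close the loop through (iii) and (iv). For (i)$\Rightarrow$(iii): given $a\in R$ and $b\in A$ with $ab\in R\setminus\{0\}$, the hypothesis $ab\neq 0$ forces $a\neq 0$, so $b=(ab)/a\in R_0$; since also $b\in A$ we get $b\in R_0\cap A=R$. For (iii)$\Rightarrow$(iv): if $a,b\in R$ and $a\mid_A b$, write $b=ac$ with $c\in A$; after disposing of the degenerate cases $a=0$ or $b=0$ (where $a\mid_R b$ is immediate) one has $ac=b\in R\setminus\{0\}$, so (iii) yields $c\in R$ and hence $a\mid_R b$. For (iv)$\Rightarrow$(i): given $x\in R_0\cap A$, write $x=p/q$ with $p,q\in R$, $q\neq 0$; then $q\mid_A p$, so (iv) gives $p=qd$ with $d\in R$, and cancelling $q$ in the domain $A$ yields $x=d\in R$.

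The arguments are short, and the only points demanding care — the mild obstacle of the proof — are the bookkeeping of the nonvanishing hypotheses and the repeated use of the domain property of $A$: it is the cancellation law in $A$, together with the requirement $q\neq 0$ (respectively $a\neq 0$ coming from ``$\in R\setminus\{0\}$''), that legitimizes passing from an equality such as $qx=qd$ to $x=d$ and that guarantees the fractions written above actually lie in $R_0$. Separating the zero cases in (iii)$\Rightarrow$(iv) must not be overlooked, but it is routine.
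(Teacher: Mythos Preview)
Your proof is correct and follows essentially the same ideas as the paper's: the equivalence (i)$\Leftrightarrow$(ii) via $L=R_0$ and minimality of the fraction field, and the passage between (i), (iii), (iv) via writing elements as quotients and using cancellation in the domain $A$. The only cosmetic difference is organizational---you run a cycle (i)$\Rightarrow$(iii)$\Rightarrow$(iv)$\Rightarrow$(i), whereas the paper proves the pairwise equivalences (i)$\Leftrightarrow$(iv) and (iii)$\Leftrightarrow$(iv) separately---but the underlying computations are identical.
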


\begin{proof}
$\text{\rm (i)}\Leftrightarrow \text{\rm (iv)}$ \
Obviously, $R\subset R_0\cap A$.
Thus the equation $R_0\cap A=R$ is equivalent to the inclusion
$R_0\cap A\subset R$, the latter means that
for arbitrary $a,b\in R$, $a\neq 0$,
if $\frac{b}{a}\in A$, then $\frac{b}{a}\in R$,
what is the statement (iv).

\medskip

\noindent
$\text{\rm (i)}\Rightarrow \text{\rm (ii)}$ \
Put $L=R_0$.

\medskip

\noindent
$\text{\rm (ii)}\Rightarrow \text{\rm (i)}$ \
If $R=L\cap A$, then $R\subset L$,
and since $L$ is a field, we have $R_0\subset L$.
Hence, $R_0\cap A\subset L\cap A=R$.
The opposite inclusion is evident.

\medskip

\noindent
$\text{\rm (iii)}\Rightarrow \text{\rm (iv)}$ \
Let $a,b\in R$.
If $b=0$, then obviously $a\mid_R b$.
Let $b\neq 0$.
If $a\mid_A b$, then $b=ac$ for some $c\in A$.
By (iii) we have $c\in R$, and consequently $a\mid_R b$.

\medskip

\noindent
$\text{\rm (iv)}\Rightarrow \text{\rm (iii)}$ \
If $ab\in R\setminus\{0\}$ for $a\in R, b\in A$, then
$a\mid_A ab$.
Therefore $a\mid_R ab$, by (iv).
Hence $ab=ar$ for some $r\in R$.
If $a\neq 0$, then $b=r\in R$, since $A$ is a domain.
If $a=0$, then $ab=0$, contrary to $ab\in R\setminus\{0\}$.
\end{proof}

\begin{pr}
\label{12}
Let $A$ be a domain, let $R$ be a subring of $A$.
The following conditions are equivalent:

\medskip

\noindent
{\rm (i)} \
$R^{\ast}=A^{\ast}\cap R$,

\medskip

\noindent
{\rm (ii)} \
for every $a\in R$, if $a\in A^{\ast}$, then $a\in R^{\ast}$,

\medskip

\noindent
{\rm (iii)} \
for every $a,b\in R$, if $a\:\rpr_A\,b$, then $a\:\rpr_R\,b$.
\end{pr}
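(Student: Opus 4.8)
The plan is to split the equivalence into three pieces: the near-tautological equivalence $\text{(i)}\Leftrightarrow\text{(ii)}$, and then the two implications linking (ii) with (iii).

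First I would handle $\text{(i)}\Leftrightarrow\text{(ii)}$ by noting that the inclusion $R^{\ast}\subseteq A^{\ast}\cap R$ holds automatically: an element invertible in $R$ has its inverse already in $R\subseteq A$, hence is invertible in $A$ as well, and of course lies in $R$. Thus the set equality in (i) reduces to the reverse inclusion $A^{\ast}\cap R\subseteq R^{\ast}$. Unwinding the membership $a\in A^{\ast}\cap R$ as ``$a\in R$ and $a\in A^{\ast}$'' shows that this reverse inclusion is verbatim the statement (ii), so nothing further is needed here.

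Next, for $\text{(ii)}\Rightarrow\text{(iii)}$, take $a,b\in R$ with $a\:\rpr_A\,b$ and let $d\in R$ be an arbitrary common divisor of $a$ and $b$ in $R$. Since the factorizations witnessing $d\mid_R a$ and $d\mid_R b$ take place inside $R\subseteq A$, they equally witness $d\mid_A a$ and $d\mid_A b$, so $d$ is a common divisor of $a,b$ in $A$. Relative primeness in $A$ forces $d\in A^{\ast}$, and since $d\in R$, condition (ii) upgrades this to $d\in R^{\ast}$. Hence every common divisor of $a,b$ in $R$ is invertible in $R$, which is exactly $a\:\rpr_R\,b$.

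The one slightly nonobvious step is $\text{(iii)}\Rightarrow\text{(ii)}$, and I expect spotting the right specialization to be the main point. Given $a\in R$ with $a\in A^{\ast}$, observe that a unit of $A$ admits only invertible divisors (if $a=dc$ then $d\cdot(a^{-1}c)=1$), so $a$ has no non-invertible divisor in $A$ at all, whence $a\:\rpr_A\,a$. Applying (iii) to the pair $b=a$ yields $a\:\rpr_R\,a$. But $a$ is itself a common divisor of $a$ and $a$ in $R$, so relative primeness in $R$ forces $a\in R^{\ast}$, which is (ii). The remaining verifications are routine; the only real idea is the choice $b=a$ together with the trivial self-divisibility $a\mid_R a$.
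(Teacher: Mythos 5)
Your proof is correct, and for two of the three pieces it coincides with the paper's argument: the tautological unwinding of $\text{(i)}\Leftrightarrow\text{(ii)}$ and the common-divisor argument for $\text{(ii)}\Rightarrow\text{(iii)}$ are the same. The divergence is in $\text{(iii)}\Rightarrow\text{(ii)}$: the paper specializes (iii) to the pair $(a,1)$, asserting that $a\in R$ is invertible if and only if it is relatively prime with $1$, whereas you specialize to the pair $(a,a)$. Your choice is in fact the better one, because under the paper's own definition of relative primeness (no common non-invertible divisors) the paper's assertion fails in exactly the direction it needs: every divisor of $1$ is invertible, so \emph{every} element of a ring is relatively prime with $1$, and hence $a\:\rpr_R\,1$ carries no information about invertibility of $a$ in $R$. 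With the pair $(a,a)$ the argument goes through precisely as you wrote it: $a\in A^{\ast}$ makes every divisor of $a$ in $A$ invertible, so $a\:\rpr_A\,a$; condition (iii) then gives $a\:\rpr_R\,a$; and $a$, being a common divisor of $a$ and $a$ in $R$, must lie in $R^{\ast}$. In short, your route repairs a genuine slip in the paper's proof --- the paper's step would be correct if ``relatively prime with $1$'' were replaced by ``relatively prime with itself,'' which is exactly your specialization.
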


\begin{proof}
$\text{\rm (i)}\Leftrightarrow \text{\rm (ii)}$ \
Clearly $R^{\ast}\subset A^{\ast}\cap R$.
Therefore the equality $R^{\ast}=A^{\ast}\cap R$
is equivalent to the inclusion $A^{\ast}\cap R\subset R^{\ast}$,
the latter is a formulation of condition (ii).

\medskip

\noindent
$\text{\rm (ii)}\Rightarrow \text{\rm (iii)}$ \
Assume that (ii) holds and consider elements $a,b\in R$
relatively prime in $A$.
If $c$ is a common divisor of $a$ and $b$ in $R$,
then it is obviously their common divisor in $A$.
Hence $c$ is invertible in $A$,
then by (ii) it is invertible in $R$.
Consequently, $a$ and $b$ are relatively prime in $R$.

\medskip

\noindent
$\text{\rm (iii)}\Rightarrow \text{\rm (ii)}$ \
It is sufficient to notice that
$a\in R$ is invertible (in $A$ or $R$, respectively)
if and only if it is relatively prime with $1$.
\end{proof}

\begin{pr}
\label{13}
Let $A$ be a domain.
Let $R$ be a subring of $A$.
Consider the following conditions:

\medskip

\noindent
{\rm (i)} \
for every $a,b\in R$, if $a\mid_A b$, then $a\mid_R b$,

\medskip

\noindent
{\rm (ii)} \
for every $a,b\in R$, if $a\sim_A b$, then $a\sim_R b$,

\medskip

\noindent
{\rm (iii)} \
for every $a,b\in R$, if $a\:\rpr_A\,b$, then $a\:\rpr_R\,b$.

\medskip

\noindent
{\rm (iv)} \
for every $a\in R$, if $a\in \Irr A$, then $a\in \Irr R$.

\medskip
\noindent
Then we have:
$$\text{\rm (i)}\Rightarrow \text{\rm (ii)}\Rightarrow
\text{\rm (iii)}\Rightarrow \text{\rm (iv)}.$$
\end{pr}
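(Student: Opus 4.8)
The plan is to establish the three implications in sequence, the recurring theme being that each notion reduces to the previous one once invertibility in $R$ is correctly matched with invertibility in $A$. For $\text{(i)}\Rightarrow\text{(ii)}$ I would simply unfold associateness: for $a,b\in R$ the relation $a\sim_A b$ means $a\mid_A b$ and $b\mid_A a$. Applying hypothesis (i) to each of these two divisibilities (both members lie in $R$) gives $a\mid_R b$ and $b\mid_R a$, i.e.\ $a\sim_R b$. This step is immediate.

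For $\text{(ii)}\Rightarrow\text{(iii)}$ the key observation is that an element $c$ is invertible exactly when it is associated with $1$. Take $a,b\in R$ with $a\:\rpr_A\,b$ and let $c\in R$ be an arbitrary common divisor of $a$ and $b$ in $R$. Then $c$ is also a common divisor in $A$, so relative primeness in $A$ forces $c\in A^{\ast}$, that is, $c\sim_A 1$. Now apply (ii) to the pair $(c,1)$ to obtain $c\sim_R 1$, hence $c\in R^{\ast}$. Thus every common divisor of $a$ and $b$ in $R$ is invertible in $R$, which is precisely $a\:\rpr_R\,b$.

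For $\text{(iii)}\Rightarrow\text{(iv)}$ I would first extract from (iii) the unit-comparison property $A^{\ast}\cap R\subset R^{\ast}$, which is available because the condition (iii) here is identical with condition (iii) of Proposition~\ref{12}, equivalent there to $a\in A^{\ast}\Rightarrow a\in R^{\ast}$ for $a\in R$. Now take $a\in R$ with $a\in\Irr A$. Then $a\neq 0$ and $a\notin A^{\ast}$, so $a\notin R^{\ast}$ since $R^{\ast}\subset A^{\ast}$. If $a=bc$ with $b,c\in R$, irreducibility of $a$ in $A$ yields $b\in A^{\ast}$ or $c\in A^{\ast}$, and the unit-comparison property puts the corresponding factor in $R^{\ast}$. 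Hence $a\in\Irr R$.

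The main obstacle throughout is the comparison of invertibility in the two rings: the relative-primeness and irreducibility statements only behave well once one knows that an element of $R$ which is a unit of $A$ is already a unit of $R$. The device that makes this painless is to encode ``$c\in A^{\ast}$'' as ``$c\sim_A 1$'', so that the associateness hypothesis (ii) transfers invertibility downward in the second implication, while in the last implication the same fact is supplied directly by Proposition~\ref{12}.
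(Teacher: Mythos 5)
Your proposal is correct and follows essentially the same route as the paper: (i)$\Rightarrow$(ii) by unfolding associateness as mutual divisibility, (ii)$\Rightarrow$(iii) by encoding invertibility as being associated with $1$ and running the argument of Proposition~\ref{12}, and (iii)$\Rightarrow$(iv) by transferring units via Proposition~\ref{12}. The only cosmetic differences are that you inline the Proposition~\ref{12} argument in the second step and argue the third step directly rather than by contraposition, with slightly more explicit care about the non-zero/non-unit conditions.
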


\begin{proof}
\noindent
$\text{\rm (i)}\Rightarrow \text{\rm (ii)}$ \
It suffices to note that $a,b\in R$ are associated
(in $A$ or $R$, respectively) if and only if
$a\mid b$ and~$b\mid a$.

\medskip

\noindent
$\text{\rm (ii)}\Rightarrow \text{\rm (iii)}$ \
It suffices to observe that $a\in R$ is invertible
(in $A$ or $R$, respectively) if and only if
it is associated with $1$.
Then the assertion follows from the equivalence
${\rm (ii)}\Leftrightarrow {\rm (iii)}$ of Proposition \ref{12}.

\medskip

\noindent
$\text{\rm (iii)}\Rightarrow \text{\rm (iv)}$ \
Assume that (iii) holds and consider $a\in R$ reducible in $R$.
Then $a=bc$ for some elements $b,c\in R$ not invertible in $R$.
From (iii) we deduce that $b$ and $c$ are not invertible in~$A$
(see Proposition \ref{12}), hence $a$ is reducible in $A$.
\end{proof}

As a consequence of Propositions \ref{11}, \ref{12}, \ref{13}
we obtain Corollary \ref{14}.

\begin{cor}
\label{14}
If $A$ is a domain and $R$ is a subring of $A$,
then the following implications hold:
\renewcommand{\arraystretch}{0.2}
$$\begin{array}{rcccc}
R_0\cap A=R &&&& \\
 & \mbox{\begin{psfrags}\rotatebox{-25}{$\Rightarrow$}\end{psfrags}}
 &&& \\
 && A^{\ast}\cap R=R^{\ast} & \Rightarrow & R\cap\Irr A\subset\Irr R. \\
 & \mbox{\begin{psfrags}\rotatebox{25}{$\Rightarrow$}\end{psfrags}}
 &&& \\
R^{\ast}=A^{\ast} &&&&
\end{array}$$
\end{cor}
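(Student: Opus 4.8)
The plan is to observe that this corollary asks for nothing new: each arrow of the diagram is obtained by splicing together the equivalences and implications already established in Propositions \ref{11}, \ref{12}, \ref{13}. The only real work is bookkeeping, namely checking that the conditions written verbatim inside those three propositions literally coincide, so that the chains of equivalences and implications compose without gaps. Once that matching is made explicit, no further computation is required.

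For the upper arrow $R_0\cap A=R\Rightarrow A^{\ast}\cap R=R^{\ast}$, I would trace the following path. By Proposition \ref{11}, the hypothesis $R_0\cap A=R$ is its condition (i), which is equivalent to its condition (iv): for every $a,b\in R$, if $a\mid_A b$ then $a\mid_R b$. This is verbatim condition (i) of Proposition \ref{13}. Applying the implication (i)$\Rightarrow$(iii) of Proposition \ref{13} yields its condition (iii): for every $a,b\in R$, if $a\:\rpr_A\,b$ then $a\:\rpr_R\,b$. But this is verbatim condition (iii) of Proposition \ref{12}, which by the equivalence (i)$\Leftrightarrow$(iii) of that proposition is exactly $A^{\ast}\cap R=R^{\ast}$.

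For the lower arrow $R^{\ast}=A^{\ast}\Rightarrow A^{\ast}\cap R=R^{\ast}$ I would argue directly: if $R^{\ast}=A^{\ast}$, then $A^{\ast}=R^{\ast}\subset R$, whence $A^{\ast}\cap R=A^{\ast}=R^{\ast}$; here no proposition is needed. For the rightmost arrow $A^{\ast}\cap R=R^{\ast}\Rightarrow R\cap\Irr A\subset\Irr R$, I would note that by the equivalence (i)$\Leftrightarrow$(iii) of Proposition \ref{12} the hypothesis is condition (iii) of Proposition \ref{12}, which is verbatim condition (iii) of Proposition \ref{13}; then the implication (iii)$\Rightarrow$(iv) of Proposition \ref{13} delivers its condition (iv), namely that every $a\in R$ lying in $\Irr A$ lies in $\Irr R$, i.e.\ the inclusion $R\cap\Irr A\subset\Irr R$. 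The main obstacle, as anticipated, is purely organizational: one must be careful to identify which condition of which proposition is being invoked at each step, since the same statement reappears under different labels, and it is precisely this coincidence of labels that makes the three propositions assemble into the stated diagram.
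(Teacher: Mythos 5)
Your proposal is correct and matches the paper's intent exactly: the paper gives no separate argument, stating only that Corollary \ref{14} follows as a consequence of Propositions \ref{11}, \ref{12}, \ref{13}, and your explicit tracing (Prop.~\ref{11}(i)$\Leftrightarrow$(iv) $=$ Prop.~\ref{13}(i), then \ref{13}(i)$\Rightarrow$(iii) $=$ Prop.~\ref{12}(iii)$\Leftrightarrow$(i), plus the trivial lower arrow and the chain \ref{12}(i)$\Leftrightarrow$(iii)$\Rightarrow$\ref{13}(iv) for the rightmost arrow) is precisely the bookkeeping the paper leaves to the reader.
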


It is easily seen that none of the one-way implications
of Proposition \ref{13} can be reversed in general.

\begin{ex}
\label{15}
If $A$ is equal to $k[x]$, a polynomial ring
in one variable over a field $k$,
and $R=k[x^2,x^3]$, then condition (i) is not fulfilled,
because $x^3$ is divisible by $x^2$ in $A$,
but it is not in $R$.
However (ii) holds, because $A^{\ast}=k\setminus\{0\}$,
hence if $f$ and $g$ are associated in $A$,
then $f=cg$ for some $c\in k\setminus\{0\}$,
that is, $f$ and $g$ are associated in $R$.
\end{ex}

\begin{ex}
\label{16}
If $A=k(x)[y]$ and $R=k[xy,y]$, where $k$ is a field,
then condition (ii) does not hold, because $xy$ and $y$
are associated in $A$, yet they are not in $R$.
Condition (iii) is fulfilled, since
$A^{\ast}\cap R=k(x)\cap k[xy,y]=k=R^{\ast}$
(see Proposition \ref{12}).
\end{ex}

\begin{ex}
\label{17}
If $A$ is a field and $R$ is not a field, then (iii)
is not fulfilled, because an irreducible element of $R$
is not relatively prime with~$1$ in~$R$,
but is relatively prime with~$1$ in~$A$.
Condition (iv) holds, since $\Irr A=\emptyset $.
\end{ex}

It is worth noting that the conditions in Corollary \ref{14}
have no immediate relationship with unique factorization in $R$.
More precisely, the strongest conditions in Corollary \ref{14}
do not imply unique factorization in $R$,
and unique factorization in $R$ does not imply the weakest
of the conditions in question.
Here are examples.

\begin{ex}
\label{18}
Let $A=k[x,y]$ and $R=k[x^2,y^2,xy]$, where $k$ is a field.
Then both conditions $R^{\ast}=A^{\ast}$ and $R_0\cap A=R$ are
fulfilled, but there is no unique factorization in $R$,
since $x^2\cdot y^2=(xy)^2$.
\end{ex}

\begin{ex}
\label{19}
Let $A=k(x)[y]$ and $R=k[x,y]$, where $k$ is a field.
Clearly $R$ is a unique factorization domain,
however $R\cap\Irr A\subset\Irr R$ does not hold, because $xy$
is irreducible in $A$ and reducible in $R$.
\end{ex}

\section{Factoriality with respect to a subring}
\label{s2}

We introduce the notion of factorial closedness of one subring
with respect to factors from another subring.

\begin{df}
\label{21}
Let $B$ be a subring of $A$.
The subring $R$ of $A$ is called $B$-factorially closed,
if, whenever $a\in A$, $b\in B$ and $ab\in R\setminus\{0\}$,
then $a\in R$.
If $R$ is $R$-factorially closed, then we call it
self-factorially closed.
\end{df}

Note that \emph{$A$-factorially closed} in the sense
of the above definition is equivalent to usual notion
of \emph{factorially closed} (in $A$).

\begin{pr}
\label{22}
Let $A$ be a domain of characteristic $p>0$
and let $R$ be a subring of $A$ such that $A^p\subset R$,
where $A^p=\{a^p,\,a\in A\}$.
The following conditions are equivalent:

\smallskip

\noindent
{\rm (i)} \
the ring $R$ is separably algebraically closed in $A$,

\smallskip

\noindent
{\rm (ii)} \
$R_0\cap A=R$,

\smallskip

\noindent
{\rm (iii)} \
the ring $R$ is self-factorially closed in $A$,

\smallskip

\noindent
{\rm (iv)} \
the ring $R$ is $A^p$-factorially closed in $A$.
\end{pr}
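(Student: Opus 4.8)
The plan is to route the entire cycle through a single structural observation about characteristic $p$: since $A^p\subset R\subset R_0$, every $a\in A$ satisfies $a^p\in R_0$, and because $X^p-a^p=(X-a)^p$ in characteristic $p$, the element $a$ is a root of the purely inseparable polynomial $(X-a)^p\in R_0[X]$. Thus every element of $A$ is purely inseparable over $R_0$. I would first exploit this to settle $\text{(i)}\Leftrightarrow\text{(ii)}$. An element of $A$ is separably algebraic over $R_0$ exactly when it is both separable and purely inseparable over $R_0$, which forces it to lie in $R_0$; conversely every element of $R_0\cap A$ is trivially separable over $R_0$. Hence the separable algebraic closure of $R$ in $A$ equals $R_0\cap A$, so condition (i), asserting that this closure is $R$, is literally condition (ii).

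Next I would dispatch $\text{(ii)}\Leftrightarrow\text{(iii)}$ by appealing to Proposition \ref{11}. Condition (ii) here coincides with condition (i) of Proposition \ref{11}, which is equivalent there to its condition (iii): for every $a\in R$ and $b\in A$, if $ab\in R\setminus\{0\}$ then $b\in R$. By commutativity of multiplication this is exactly the assertion that $R$ is $R$-factorially closed, i.e.\ self-factorially closed, which is condition (iii) of the present proposition. So no new work is needed beyond citing Proposition \ref{11}.

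For the remaining two conditions, $\text{(iii)}\Rightarrow\text{(iv)}$ is immediate: since $A^p\subset R$, the self-factorial implication (with the distinguished factor ranging over all of $R$) applies in particular when that factor lies in $A^p$, which is precisely $A^p$-factorial closedness. The one genuinely substantive step is $\text{(iv)}\Rightarrow\text{(ii)}$, and this is where I expect the main difficulty to lie, since I must manufacture an $A^p$-factor out of a bare fraction. Given $x\in R_0\cap A$, write $x=a/b$ with $a,b\in R$, $b\neq 0$; if $x=0$ we are done, so assume $x\neq 0$, whence $a\neq 0$. The key manipulation is to clear the denominator using a $p$-th power:
\[
x\cdot b^p = a\,b^{p-1}\in R\setminus\{0\},
\]
where $b^p\in A^p$ and $a\,b^{p-1}\in R$ because $a,b\in R$, the product being nonzero as $A$ is a domain. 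Applying condition (iv) with the factor $b^p\in A^p$ yields $x\in R$, proving $R_0\cap A\subset R$; the reverse inclusion is automatic. This closes the cycle $\text{(i)}\Leftrightarrow\text{(ii)}\Leftrightarrow\text{(iii)}\Rightarrow\text{(iv)}\Rightarrow\text{(ii)}$.
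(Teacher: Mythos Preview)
Your proof is correct and follows essentially the same strategy as the paper. There are two minor differences worth noting. First, for $\text{(i)}\Leftrightarrow\text{(ii)}$ the paper simply cites an external reference (\cite{rings}, Proposition~2.2), whereas you supply the underlying argument directly via the observation that $A^p\subset R_0$ forces every element of $A$ to be purely inseparable over $R_0$; your version is self-contained and makes clear why the hypothesis $A^p\subset R$ is exactly what is needed. Second, you close the cycle with $\text{(iv)}\Rightarrow\text{(ii)}$, writing $x=a/b$ and multiplying by $b^p$, while the paper proves $\text{(iv)}\Rightarrow\text{(iii)}$ directly by taking $a\in A$, $b\in R$ with $ab\in R\setminus\{0\}$ and observing $ab^p=(ab)b^{p-1}\in R\setminus\{0\}$; both routes rest on the identical trick of replacing the $R$-factor $b$ by $b^p\in A^p$, so the substance is the same.
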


\begin{proof}
$\text{(i)}\Leftrightarrow\text{(ii)}$
was stated in \cite{rings}, Proposition 2.2,

\medskip

\noindent
$\text{(ii)}\Leftrightarrow\text{(iii)}$
follows from Lemma~\ref{11},

\medskip

\noindent
$\text{(iii)}\Rightarrow\text{(iv)}$
is obvious.

\medskip

\noindent
$\text{(iv)}\Rightarrow\text{(iii)}$
Assume that condition (iv) holds and consider
$a\in A$ and $b\in R$ such that $ab\in R\setminus\{0\}$.
Then $ab^p\in R\setminus\{0\}$ so, by the assumption, $a\in R$.
\end{proof}

If $R$ is a finitely generated $K$-algebra such that $A^p\subset R$,
then the above equivalent conditions characterize $R$
as a ring of constants of some $K$-deri\-va\-tion of $A$.

\section{A general diagram of implications}
\label{s3}

In this section we consider various properties similar to
$\Irr R\subset\Sqf A$ and $\Sqf R\subset\Sqf A$,
and we present basic relations between them.

\medskip

Given a ring $R$, we denote the following sets:
\begin{itemize}
\item[--]
$\Prime R$ of all prime elements of $R$,
\item[--]
$\Gpr R$ of (single) generators of principal radical
ideals of $R$,
\item[--]
$\Rdl R$ of radical ideals of $R$
(see \cite{BelluceDiNF}, p.\ 68).
\end{itemize}

\begin{lm}
\label{31}
If $R$ is a ring, then:

\medskip

\noindent
{\bf a)}
$\Irr R\subset\Sqf R$,

\medskip

\noindent
{\bf b)}
$\Prime R\subset\Gpr R$.
\end{lm}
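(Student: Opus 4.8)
The plan is to treat the two inclusions separately, each being a direct unwinding of the definitions of $\Irr R$, $\Sqf R$, $\Prime R$ and $\Gpr R$.

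For part \textbf{a)}, I would fix $a \in \Irr R$ and argue by contradiction. Suppose $a$ were not square-free, so that $a = b^2 c$ for some $b \in R \setminus R^{\ast}$ and $c \in R$. The crucial step is to regroup this product as $a = b \cdot (bc)$: since $a$ is irreducible, one of the two factors $b$, $bc$ must be invertible. By the choice of $b$ it cannot be $b$, so $bc \in R^{\ast}$. But any divisor of a unit is itself a unit, whence $b \in R^{\ast}$, contradicting $b \in R \setminus R^{\ast}$. Therefore no such representation exists and $a \in \Sqf R$.

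For part \textbf{b)}, I would fix $p \in \Prime R$ and prove that the principal ideal $(p)$ is radical, which is exactly what $p \in \Gpr R$ asserts. First I would observe that $p$ being prime is equivalent to $(p)$ being a proper prime ideal: the implication that $p \mid_R ab$ forces $p \mid_R a$ or $p \mid_R b$ restates precisely the condition that $ab \in (p)$ entails $a \in (p)$ or $b \in (p)$, while $p \notin R^{\ast}$ gives $(p) \neq R$. Then I would invoke the standard fact that every prime ideal is radical: if $x^n \in (p)$ for some $n$, a short induction on $n$ using primeness yields $x \in (p)$, so that $\sqrt{(p)} = (p)$. Hence $(p)$ is a principal radical ideal, that is, $p \in \Gpr R$.

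I do not expect a genuine obstacle in either part, since both reduce to elementary manipulations. The only points deserving a moment's care are the regrouping $b^2 c = b \cdot (bc)$ together with the remark that divisors of units are units in part \textbf{a)}, and the clean identification of the prime-element condition with primeness of the ideal $(p)$ in part \textbf{b)}; everything else is immediate from the definitions.
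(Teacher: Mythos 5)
Your proposal is correct and takes essentially the same route as the paper: in part \textbf{a)} you use the identical regrouping $b^2c = b\cdot(bc)$ (the paper argues contrapositively where you argue by contradiction, and you make explicit the ``divisors of units are units'' step the paper leaves implicit), and in part \textbf{b)} you simply spell out the paper's one-line observation that every prime ideal is radical.
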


\begin{proof}
{\bf a)}
Consider an element $x\in R$.
Assume that $x\notin\Sqf R$, that is $x=y^2z$,
where $y\in R\setminus R^{\ast}, z\in R$.
Then $x=y\cdot (yz)$, where $y,yz\in R\setminus R^{\ast}$,
so $x\notin\Irr R$.

\medskip

\noindent
{\bf b)}
This holds because every prime ideal is radical.
\end{proof}

\begin{lm}
\label{32}
If $R$ is a domain, then:

\medskip

\noindent
{\bf a)}
$\Prime R\subset\Irr R$,

\medskip

\noindent
{\bf b)}
$\Gpr R\subset\Sqf R$.
\end{lm}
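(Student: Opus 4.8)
For part \textbf{a)} the plan is to run the classical argument that prime elements of a domain are irreducible. Let $p\in\Prime R$; by definition $p$ is a nonzero non-unit, and I would verify irreducibility directly. Suppose $p=bc$ with $b,c\in R$. Then $p\mid_R bc$, so primality of $p$ yields $p\mid_R b$ or $p\mid_R c$; say $b=pd$ for some $d\in R$. Substituting gives $p=pdc$, and since $R$ is a domain and $p\neq 0$ I may cancel $p$ to obtain $dc=1$, so $c\in R^{\ast}$. Hence one of the two factors is invertible, and therefore $p\in\Irr R$.

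For part \textbf{b)} I would argue by contraposition: starting from an element that fails to be square-free, I would show that the principal ideal it generates fails to be radical. So assume $a\in R\setminus\Sqf R$ is nonzero, say $a=b^2c$ with $b\in R\setminus R^{\ast}$ and $c\in R$; since $R$ is a domain and $a\neq 0$, both $b\neq 0$ and $c\neq 0$. The key step is to exhibit a witness to non-radicality. I would set $x=bc$ and compute $x^2=b^2c^2=(b^2c)c=ac$, so that $x^2\in(a)$. It then remains to check $x\notin(a)$: if $a\mid_R bc$, then $bc=ad=b^2cd=(bc)(bd)$ for some $d\in R$, and cancelling the nonzero element $bc$ in the domain $R$ gives $1=bd$, contradicting $b\notin R^{\ast}$. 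Thus $(a)$ is not a radical ideal, and consequently $a\notin\Gpr R$.

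The main obstacle I expect is not the algebra but the bookkeeping about which elements are being compared. The substantive content of \textbf{b)} is the choice of witness $x=bc$ together with the cancellation available in a domain, and once these are in place the contrapositive closes at once. The single point that demands care is the element $0$: under the standing convention that $\Prime R$, $\Irr R$, and $\Sqf R$ consist of nonzero elements, both inclusions are statements about nonzero elements, and the cancellation steps in \textbf{a)} and \textbf{b)} rely precisely on $R$ being a domain with the relevant factors nonzero. I would record this restriction explicitly so that the two new inclusions dovetail with Lemma~\ref{31} into the expected diamond $\Prime R\subset\Irr R\subset\Sqf R$ and $\Prime R\subset\Gpr R\subset\Sqf R$.
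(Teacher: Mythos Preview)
Your proposal is correct and follows essentially the same route as the paper. Part~\textbf{a)} is simply declared ``well known'' there, while for part~\textbf{b)} the paper argues directly (take $r\in\Gpr R$, write $r=x^2y$, observe $(xy)^2=ry\in Rr$, use radicality to get $xy=rz=x^2yz$, cancel) and you argue contrapositively, but the key witness $bc$ and the cancellation step are identical.
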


\begin{proof}
{\bf a)}
This fact is well known.

\medskip

\noindent
{\bf b)}
Consider an element $r\in\Gpr R$.
Let $r=x^2y$, where $x,y\in R$.
We have $(xy)^2=ry$, so $(xy)^2\in Rr$, and then $xy\in Rr$,
because $Rr$ is a radical ideal.
We obtain $xy=rz$, so $xy=x^2yz$, and hence $1=xz$.
\end{proof}

From the above lemmas we obtain.

\begin{pr}
\label{33}
Let $A$ be a domain.
Let $R$ be a subring of~$A$.
The following implications hold:
\renewcommand{\arraycolsep}{0.5mm}
\begin{small}
$$\begin{array}{ccccccc}
\Irr R\subset\Irr A & \Rightarrow & \Prime R\subset\Irr A &
\Leftarrow & \Prime R\subset\Prime A & \Leftarrow &
\forall_{I\in\Spec R} AI\in\Spec A \\
\Downarrow & & \Downarrow & & \Downarrow & & \Downarrow \\
\Irr R\subset\Sqf A & \Rightarrow & \Prime R\subset\Sqf A &
\Leftarrow & \Prime R\subset\Gpr A & \Leftarrow &
\forall_{I\in\Spec R} AI\in\Rdl A \\
\Uparrow & & \Uparrow & & \Uparrow & & \Uparrow \\
\Sqf R\subset\Sqf A & \Rightarrow & \Gpr R\subset\Sqf A &
\Leftarrow & \Gpr R\subset\Gpr A & \Leftarrow &
\forall_{I\in\Rdl R} AI\in\Rdl A
\end{array}$$
\end{small}
\end{pr}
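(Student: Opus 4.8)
The plan is to verify each arrow of the diagram separately, observing that every one reduces to chaining a single elementary inclusion onto the given hypothesis by transitivity of set inclusion. The elementary inclusions needed are exactly those of Lemmas~\ref{31} and~\ref{32}: for an arbitrary ring $S$ one has $\Irr S\subset\Sqf S$ and $\Prime S\subset\Gpr S$, while for a domain $S$ one has $\Prime S\subset\Irr S$ and $\Gpr S\subset\Sqf S$. Since $R$ and $A$ are both domains, all four are at my disposal with $S=R$ and with $S=A$.

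First I would dispose of the vertical arrows in the first three columns together with the two horizontal arrows in each row that converge on the second column. Each of these leaves the shape $\bullet R\subset\bullet A$ intact and merely shrinks the left-hand side or enlarges the right-hand side. A downward vertical arrow applies the relevant lemma on the $A$-side, as in $\Irr R\subset\Irr A\subset\Sqf A$; an upward one applies it on the $R$-side, as in $\Irr R\subset\Sqf R\subset\Sqf A$. The horizontal arrows are identical in spirit: either the left-hand set shrinks, as in $\Prime R\subset\Irr R\subset\Irr A$, or the right-hand set grows, as in $\Prime R\subset\Prime A\subset\Irr A$ and $\Prime R\subset\Gpr A\subset\Sqf A$. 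Each is a two-step inclusion chain requiring no further idea.

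The remaining arrows all involve the ideal-theoretic conditions of the fourth column, and for these I would pass between an element and the principal ideal it generates. The point is that for $p\in\Prime R$ one has $pR\in\Spec R$ with $A\cdot(pR)=pA$, and for $g\in\Gpr R$ one has $gR\in\Rdl R$ with $A\cdot(gR)=gA$. Granting this, $\forall_{I\in\Spec R}AI\in\Spec A$ forces $pA\in\Spec A$, whence $p\in\Prime A$; replacing $\Spec A$ by $\Rdl A$ makes $pA$ radical, so $p\in\Gpr A$; and the same argument with $g$ in place of $p$ gives $\Gpr R\subset\Gpr A$. The two vertical arrows of the fourth column use only that prime ideals are radical, that is $\Spec S\subset\Rdl S$: downward this turns $AI\in\Spec A$ into $AI\in\Rdl A$, and upward the hypothesis quantified over the larger set $\Rdl R\supset\Spec R$ specializes to $\Spec R$.

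I expect the only delicate point to be the bookkeeping of the previous paragraph: checking $A\cdot(pR)=pA$ (which holds since $pR\subset pA$ while $p\in pR$), and, more importantly, that $pA\in\Spec A$ really lets me conclude $p$ is a prime \emph{element} of $A$ and not merely that $pA$ is a prime ideal. Here I would note that a prime ideal is proper, so $pA\neq A$ forces $p\notin A^{\ast}$, while $p\neq0$ together with $A$ being a domain gives $pA\neq0$; these two facts make $p$ a genuine prime element. Once it is also checked that units and the improper ideal cause no trouble in the definitions of $\Gpr$ and $\Rdl$, the proposition follows by assembling the inclusions above.
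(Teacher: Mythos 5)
Your proof is correct and follows exactly the route the paper intends: the paper's entire proof of Proposition \ref{33} is the single sentence ``From the above lemmas we obtain,'' and your transitivity chains are precisely the omitted applications of Lemmas \ref{31} and \ref{32} (on the $R$-side and the $A$-side as appropriate). You also correctly supply the one ingredient the lemmas do not literally cover, namely the fourth-column bookkeeping that $A\cdot(pR)=pA$ and that $pA\in\Spec A$ (respectively $pA\in\Rdl A$) makes $p$ a prime element (respectively an element of $\Gpr A$), which the paper leaves entirely implicit.
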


\section{Some factorial conditions for subrings}
\label{s4}

The last section contains various properties in a factorial form.
In the first proposition we express a condition from \cite{analogs},
Theorem~3.4 in terms of irreducible and square-free factorizations.

\begin{pr}
\label{41}
Let $A$ be a UFD and
let $R$ be a subring of~$A$ such that $R^{\ast}=A^{\ast}$.
The following conditions are equivalent:

\medskip

\noindent
{\rm (i)} \
$\forall\, a\in A, \; \forall\, b\in\Sqf A, \;
a^2b\in R\setminus\{0\}\Rightarrow a,b\in R$,

\medskip

\noindent
{\rm (ii)} \
$\forall\, s_0,\dots,s_n\in\Sqf A, \;
s_n^{2^n}\dots s_1^2s_0\in R\Rightarrow s_0,\dots , s_n\in R$,

\medskip

\noindent
{\rm (iii)} \
$\forall\, q_1,\dots,q_n\in\Irr A, \, q_i\not\sim_A q_j, \,
i\neq j, \; \forall\, k_1,\dots,k_n\geqslant 0,$
$$q_1^{k_1}\dots q_n^{k_n}\in R\Rightarrow
\forall\, i, \; q_1^{c^{(1)}_i}\dots q_n^{c^{(n)}_i}\in R,$$
where $k_j=c^{(j)}_r2^r+\ldots+c^{(j)}_02^0$ for $j=1,\dots,n$,
and $c^{(j)}_i\in\{0,1\}$ for $i=0,\dots,r$.
\end{pr}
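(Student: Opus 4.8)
The plan is to make the binary expansion of exponents the organizing principle, since all three conditions describe the same ``binary layers'' of a factorization. In the UFD $A$ every nonzero element is, up to a unit, a product of powers of pairwise non-associated irreducibles, and it is square-free exactly when all of these exponents lie in $\{0,1\}$; in particular $0\notin\Sqf A$. The dictionary I would establish first is this: for pairwise non-associated irreducibles $q_1,\dots,q_n$ and exponents $k_j=\sum_{i\geqslant 0}c^{(j)}_i2^i$ written in binary, the elements $s_i=\prod_j q_j^{c^{(j)}_i}$ are square-free and satisfy
$$\prod_{j=1}^n q_j^{k_j}=\prod_{i\geqslant 0}s_i^{2^i};$$
conversely the exponent of each $q_j$ in $\prod_i s_i^{2^i}$ is $\sum_i c^{(j)}_i2^i$ with digits in $\{0,1\}$, so uniqueness of binary expansion recovers the $s_i$ (up to units) from the product. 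Throughout I would invoke $R^{\ast}=A^{\ast}$ to move unit factors freely in and out of $R$, and I read (ii) and (iii) as also quantified over the number of factors.

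Granting this dictionary, (ii)$\Leftrightarrow$(iii) becomes almost formal. To go from (iii) to (ii) I would take the finitely many irreducibles dividing the given product $\prod_i s_i^{2^i}\in R$, read the binary digits $c^{(j)}_i$ of their exponents off the $s_i$, absorb the accumulated unit into $R$, and apply (iii) to conclude $\prod_j q_j^{c^{(j)}_i}\in R$, hence $s_i\in R$. The reverse passage substitutes $s_i=\prod_j q_j^{c^{(j)}_i}$ and applies (ii). The only subtlety is that ``$s_i\in R$'' and ``$\prod_j q_j^{c^{(j)}_i}\in R$'' must agree up to a unit, which is exactly where $R^{\ast}=A^{\ast}$ is used.

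For (i)$\Rightarrow$(ii) I would induct on $n$, the base case $n=0$ being trivial. Writing $w=s_n^{2^n}\cdots s_1^2 s_0$ and $u=\prod_{i=1}^{n}s_i^{2^{i-1}}$, one has $w=u^2s_0$ with $s_0\in\Sqf A$ and $w\in R\setminus\{0\}$ (square-free elements are nonzero and $A$ is a domain), so (i) gives $u,s_0\in R$; since $u=\prod_{i=0}^{n-1}s_{i+1}^{2^i}$ has the same shape with one fewer factor, the inductive hypothesis yields $s_1,\dots,s_n\in R$. For (ii)$\Rightarrow$(i), given $a^2b\in R\setminus\{0\}$ with $b\in\Sqf A$, I would factor $a=u_a\prod_j q_j^{m_j}$ and $b=u_b\prod_j q_j^{\varepsilon_j}$ with units $u_a,u_b$ and $\varepsilon_j\in\{0,1\}$, so the exponent of $q_j$ in $a^2b$ is $2m_j+\varepsilon_j$; expanding in binary and collecting layers writes $a^2b=\prod_{i=0}^{r}s_i^{2^i}$ with $s_0\sim_A b$ and $\prod_{i\geqslant 1}s_i^{2^{i-1}}\sim_A a$. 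Then (ii) gives every $s_i\in R$, and $b,a$ are recovered from $s_0$ and $\prod_{i\geqslant 1}s_i^{2^{i-1}}$ up to the units $u_a,u_b\in R^{\ast}$.

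I expect the main difficulty to be bookkeeping rather than any single deep step. One must keep the unit factors under control at every construction---this is the sole role of the hypothesis $R^{\ast}=A^{\ast}$---and check that the layer elements $s_i$ produced at each stage are genuinely square-free and reassemble to the intended product. Once the binary-expansion dictionary of the first paragraph is set up with this care, each of the three implications reduces to one application of that dictionary together with the already-available condition.
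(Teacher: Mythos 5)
Your proposal is correct and follows essentially the same route as the paper: the binary square-free decomposition $a=s_n^{2^n}\cdots s_1^2s_0$ and the digit dictionary between exponents $k_j$ and layers $s_i$ are exactly the paper's devices for (ii)$\Leftrightarrow$(i) and (ii)$\Leftrightarrow$(iii), and your induction for (i)$\Rightarrow$(ii) is the ``simple induction'' the paper leaves to the reader. Your explicit tracking of units via $R^{\ast}=A^{\ast}$ fills in details the paper treats implicitly, but the argument is the same.
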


\begin{proof}
$\text{\rm (ii)}\Rightarrow\text{\rm (i)}$ \
Assume {\rm (ii)} and consider elements $a\in A$, $b\in\Sqf  A$.
Let $a=s_n^{2^n}\ldots s_1^2s_0$, where $s_0,\ldots,s_n\in\Sqf A$.
If $a^2b=s_n^{2^{n+1}}\ldots s_1^{2^2}s_0^2b\in R\setminus\{0\}$,
then $s_n,\ldots,s_1,s_0,b\in R$ by {\rm (ii)},
and then $a=s_n^2\ldots s_1^2s_0\in R$.

\medskip

\noindent
$\text{\rm (iii)}\Rightarrow\text{\rm (ii)}$ \
Assume condition {\rm (iii)}.
Let $s_0,\ldots,s_n\in\Sqf A$ satisfy $s_n^{2^n}\ldots s_1^2s_0\in R$.
We can write $s_i=u_iq_1^{c^{(1)}_{i}}\ldots q_m^{c^{(m)}_{i}}$,
where $u_i\in A^{\ast}$, $q_1,\ldots,q_m\in\Irr A$
with $q_j\not\sim_A q_l$ for $j\neq l$ and $c^{(j)}_{i}\in\{0, 1\}$.
Then $s_n^{2^n}\ldots s_1^2s_0=u_n^{2^n}\ldots
u_1^2u_0\cdot q_1^{k_1}\ldots q_m^{k_m}$,
where $k_j=c^{(j)}_{n}2^n+\ldots+c^{(j)}_{1}2+c^{(j)}_{0}$.
By the assumption, if $q_1^{k_1}\ldots q_m^{k_m}\in R$,
then $q_1^{c^{(1)}_{i}}\ldots q_m^{c^{(m)}_{i}}\in R$
for $i=1,2,\ldots,n$.
Hence $s_0, \ldots, s_n\in R$.

\medskip

\noindent
$\text{\rm (ii)}\Rightarrow\text{\rm (iii)}$ \
Assume {\rm (ii)}.
Let $q_1^{k_1}\ldots q_n^{k_n}\in R$, where
$q_1,\ldots,q_n\in\Irr A$, $q_j\not\sim_A q_l$ for $j\neq l$.
Put $k_j=c^{(j)}_{r}2^r+\ldots+c^{(j)}_{1}2+c^{(j)}_{0}$
for $j=1,\ldots,n$, where $c^{(j)}_{i}\in\{0,1\}$.
Let $s_i=q_1^{c^{(1)}_{i}}\ldots q_n^{c^{(n)}_{i}}$.
By the assumption, since $s_n^{2^n}\ldots s_1^2s_0\in R$,
we obtain $s_0,\ldots,s_n\in R$.

\medskip

\noindent
$\text{\rm (i)}\Rightarrow\text{\rm (ii)}$ \
Simple induction.
\end{proof}

Note that factorial closedness can be expressed in the following way.

\begin{pr}
\label{42}
Let $A$ be a UFD and
let $R$ be a subring of~$A$ such that $R^{\ast}=A^{\ast}$.
The following conditions are equivalent:

\medskip

\noindent
{\rm (i)} \
$\forall\, a,b\in A, \;
ab\in R\setminus\{0\}\Rightarrow a,b\in R$,

\medskip

\noindent
{\rm (ii)} \
$\forall\, q_1,\dots, q_n\in\Irr A, \;
\forall\, k_1,\dots, k_n\geqslant 1, \;
q_1^{k_1}\dots q_n^{k_n}\in R\Rightarrow q_1,\dots,q_n\in R$.
\end{pr}

%\begin{proof}
%\end{proof}

In the next two propositions we consider factorizations
with respect to relatively prime elements.

\begin{pr}
\label{43}
Let $A$ be a UFD and
let $R$ be a subring of~$A$ such that $R^{\ast}=A^{\ast}$.
The following conditions are equivalent:

\medskip

\noindent
{\rm (i)} \
$\forall\, a,b\in A, \, a\:\rpr\,b, \;
ab\in R\setminus\{0\}\Rightarrow a,b\in R$,

\medskip

\noindent
{\rm (ii)} \
$\forall\, a_1,\ldots,a_n\in A, \, a_i\:\rpr\,a_j, \, i\neq j, \;
a_1\ldots a_n\in R\setminus\{0\}\Rightarrow a_1,\ldots,a_n\in R$,

\medskip

\noindent
{\rm (iii)} \
$\forall\, q_1,\dots,q_n\in\Irr A, \, q_i\not\sim_A q_j, \,
i\neq j, \;\forall\, k_1,\dots,k_n\geqslant 1,$
$$q_1^{k_1}\ldots q_n^{k_n}\in R\Rightarrow
q_1^{k_1},\ldots,q_n^{k_n}\in R.$$
\end{pr}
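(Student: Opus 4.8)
The plan is to prove Proposition \ref{43} by establishing the cycle of implications $\text{(i)}\Rightarrow\text{(ii)}\Rightarrow\text{(iii)}\Rightarrow\text{(i)}$, which mirrors the structure of Propositions \ref{41} and \ref{42}. Throughout, I would work with the unique factorization in $A$: every nonzero element of $A$ factors as a unit times a product of pairwise non-associated irreducibles, and since $R^{\ast}=A^{\ast}$, membership of a product in $R$ is insensitive to unit factors. The key elementary fact I would use repeatedly is that if $q_1,\dots,q_n\in\Irr A$ are pairwise non-associated, then the powers $q_i^{k_i}$ are pairwise relatively prime in $A$, since distinct irreducibles share no non-invertible common divisor.

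First I would handle $\text{(ii)}\Rightarrow\text{(iii)}$ and $\text{(iii)}\Rightarrow\text{(i)}$, as these are the more routine directions. For $\text{(ii)}\Rightarrow\text{(iii)}$, given pairwise non-associated irreducibles $q_1,\dots,q_n$ and exponents $k_1,\dots,k_n\geqslant 1$ with $q_1^{k_1}\cdots q_n^{k_n}\in R\setminus\{0\}$, I would set $a_i=q_i^{k_i}$; these are pairwise relatively prime in $A$ by the observation above, so (ii) applied to $a_1,\dots,a_n$ yields each $q_i^{k_i}\in R$. For $\text{(iii)}\Rightarrow\text{(i)}$, given relatively prime $a,b\in A$ with $ab\in R\setminus\{0\}$, I would factor $a$ and $b$ into irreducibles in $A$; because $a\:\rpr_A\,b$, no irreducible divides both, so the combined factorization of $ab$ has each irreducible power coming entirely from $a$ or entirely from $b$. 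Applying (iii) recovers each of these prime-power factors in $R$, and regrouping those belonging to $a$ (respectively $b$) together with the appropriate unit gives $a\in R$ and $b\in R$.

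The main work lies in $\text{(i)}\Rightarrow\text{(ii)}$, which I would prove by induction on $n$. The base case $n=2$ is exactly (i). For the inductive step, given pairwise relatively prime $a_1,\dots,a_n$ with $a_1\cdots a_n\in R\setminus\{0\}$, I would group the product as $a_1\cdot(a_2\cdots a_n)$. The crucial point is that $a_1$ is relatively prime in $A$ to the product $a_2\cdots a_n$: this follows from unique factorization, since $a_1$ shares no irreducible factor with any $a_j$ for $j\geqslant 2$, hence none with their product. Then (i) applied to the pair $a_1$ and $a_2\cdots a_n$ gives $a_1\in R$ and $a_2\cdots a_n\in R\setminus\{0\}$, and the inductive hypothesis applied to $a_2,\dots,a_n$ finishes the argument.

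I expect the relative-primeness-of-a-product step to be the only genuine subtlety, and it is the place where the UFD hypothesis is essential: in a general domain, relative primeness of $a_1$ with each $a_j$ individually need not imply relative primeness with the product, but in $A$ it follows immediately from comparing irreducible factorizations. Everything else reduces to bookkeeping with the factorizations, using $R^{\ast}=A^{\ast}$ to absorb units harmlessly into $R$.
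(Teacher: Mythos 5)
Your proposal is correct and takes essentially the same route as the paper: it treats (ii)$\Rightarrow$(iii) as a special case (powers of pairwise non-associated irreducibles are pairwise relatively prime), proves (iii)$\Rightarrow$(i) by splitting the irreducible factorizations of $a$ and $b$ and absorbing units via $R^{\ast}=A^{\ast}$, and proves (i)$\Rightarrow$(ii) by the induction that the paper dismisses as ``simple induction.'' Your only addition is to make explicit the key point of that induction --- that in a UFD, $a_1$ relatively prime to each $a_j$ implies $a_1$ relatively prime to $a_2\cdots a_n$ --- which the paper leaves to the reader.
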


\begin{proof}
We see that {\rm (i)} and {\rm (iii)} are the special cases
of {\rm (ii)}.

\medskip

\noindent
$\text{\rm (i)}\Rightarrow\text{\rm (ii)}$ \
Simple induction.

\medskip

\noindent
$\text{\rm (iii)}\Rightarrow\text{\rm (i)}$ \
Assume {\rm (iii)}.
Consider $a,b\in A$, $a\:\rpr\,b$, such that $ab\in R\setminus\{0\}$.
Put $a=uq_1^{k_1}\ldots q_s^{k_s}$ and
$b=vq_{s+1}^{k_{s+1}}\ldots q_n^{k_n}$, where $u,v\in A^{\ast}$,
$q_1,\dots,q_n\in\Irr A$, $q_i\not\sim_A q_j$ for $i\neq j$.
By the assumption, since
$ab=uvq_1^{k_1}\ldots q_s^{k_s}q_{s+1}^{k_{s+1}}\ldots q_n^{k_n}\in R$,
we have
$q_1^{k_1},\ldots,q_s^{k_s},q_{s+1}^{k_{s+1}},\ldots,q_n^{k_n}\in R$.
Finally, $a=uq_1^{k_1}\ldots q_s^{k_s}\in R$ and
$b=vq_{s+1}^{k_{s+1}}\ldots q_n^{k_n}\in R$.
\end{proof}

\begin{pr}
\label{44}
Let $A$ be a UFD and
let $R$ be a subring of~$A$ such that $R^{\ast}=A^{\ast}$.
The following conditions are equivalent:

\medskip

\noindent
{\rm (i)} \
$\forall\, a,b\in A, \, a\:\rpr\,b, \; \forall\, k>1, \;
(a^kb\in R\setminus\{0\}\Rightarrow a,b\in R)$,

\medskip

\noindent
{\rm (ii)} \
$\forall\, q_1,\dots,q_n\in\Irr A, \, q_i\not\sim_A q_j, \,
i\neq j, \; \forall k_1,\dots,k_r>1, \, r\leqslant n,$
$$q_1^{k_1}\dots q_r^{k_r}q_{r+1}\dots q_n\in R\Rightarrow
q_1,\dots,q_r,q_{r+1}\ldots q_n\in R.$$
\end{pr}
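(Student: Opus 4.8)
The plan is to follow the template set by the proofs of Propositions 4.1 and 4.3: translate the abstract divisibility condition (i) into an explicit statement about factorizations into powers of pairwise non-associated irreducibles, and prove the two implications separately. First I would establish $\text{(ii)}\Rightarrow\text{(i)}$. Given $a,b\in A$ with $a\:\rpr\,b$ and $a^kb\in R\setminus\{0\}$ for some $k>1$, I would write the irreducible factorizations $a=u\,q_1^{m_1}\dots q_s^{m_s}$ and $b=v\,q_{s+1}^{m_{s+1}}\dots q_n^{m_n}$, where $u,v\in A^{\ast}=R^{\ast}$ and the $q_j$ are pairwise non-associated (relative primeness of $a,b$ guarantees the two factor sets are disjoint). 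Then $a^kb=uv\,q_1^{km_1}\dots q_s^{km_s}q_{s+1}^{m_{s+1}}\dots q_n^{m_n}$, and each exponent $km_i$ on a factor of $a$ is $>1$ since $k>1$. Applying (ii) to this product yields that the high-power part and the product of the remaining (exponent-one, from $b$) irreducibles lie in $R$, from which $a,b\in R$ follows after absorbing units.

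The reverse implication $\text{(i)}\Rightarrow\text{(ii)}$ is where the real bookkeeping lives. Starting from $q_1^{k_1}\dots q_r^{k_r}q_{r+1}\dots q_n\in R$ with $k_1,\dots,k_r>1$, I would peel off the high powers one at a time by induction on $r$. At each step I set $a=q_i$ (so $a^{k_i}$ is the factor with exponent $>1$) and let $b$ be the product of everything else; since the $q_j$ are pairwise non-associated, $a\:\rpr\,b$ holds, so (i) applies and gives $q_i\in R$ together with the residual product in $R$. Iterating separates off $q_1,\dots,q_r$ and leaves $q_{r+1}\dots q_n\in R$, which is exactly the conclusion of (ii). I expect the induction to require care at the base and in re-checking relative primeness of the shrinking residual product after each extraction.

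The main obstacle I anticipate is the asymmetry of condition (i): unlike Proposition 4.3, where (i) splits a product of two coprime factors symmetrically, here (i) only concludes $a,b\in R$ when $a$ carries an exponent strictly greater than $1$ and $b$ is the complementary coprime factor. Consequently the exponent-one tail $q_{r+1}\dots q_n$ cannot be decomposed further by (i)—it survives intact as a single element of $R$, which is precisely why the conclusion of (ii) keeps $q_{r+1}\dots q_n$ as one block rather than asserting each $q_j\in R$ for $j>r$. Managing this bundling correctly, and making sure the inductive application of (i) always designates a genuinely high-power factor as the "$a$" and the full remaining coprime product as the "$b$", is the delicate point; the rest is routine manipulation of units and unique factorization.
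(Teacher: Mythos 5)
Your proposal is correct and takes essentially the same route as the paper's proof: for $\text{(i)}\Rightarrow\text{(ii)}$ you peel off the high-power irreducibles one at a time, taking $a=q_i$ and $b$ the remaining coprime product exactly as the paper does, and for $\text{(ii)}\Rightarrow\text{(i)}$ you factor $a$ and $b$ into pairwise non-associated irreducibles, note that all exponents coming from $a^k$ exceed $1$, and apply (ii), handling the split of $b$'s factors into high-power and exponent-one parts just as in the paper. Your closing remark about the asymmetry of (i) correctly identifies why (ii) bundles $q_{r+1}\dots q_n$ into a single block.
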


\begin{proof}
$\text{\rm (i)}\Rightarrow\text{\rm (ii)}$ \
Assume {\rm (i)}.
Let $q_1^{k_1}\dots q_r^{k_r}q_{r+1}\dots q_n\in R$
for some pairwise non-associated $q_1,\dots,q_n\in\Irr A$
and for some $k_1,\dots,k_r>1$.
By the assumption we have
$q_1\in R$ and $q_2^{k_2}\dots q_r^{k_r}q_{r+1}\dots q_n \in R$,
then $q_2\in R$ and $q_3^{k_3}\dots q_r^{k_r}q_{r+1}\dots q_n \in R$,
and so on, until we obtain $q_r\in R$ and $q_{r+1}\ldots q_n\in R$.

\medskip

\noindent
$\text{\rm (ii)}\Rightarrow\text{\rm (i)}$ \
Assume {\rm (ii)}.
Consider $a,b\in A$ such that $a\:\rpr\,b$
and $a^kb\in R\setminus\{0\}$ for some $k>1$.
We can write $a=uq_1^{l_1}\dots q_r^{l_r}$
and $b=vq_{r+1}^{l_{r+1}}\dots q_s^{l_s}q_{s+1}\dots q_n$,
where $u,v\in A^{\ast}$, $q_1,\dots,q_n\in\Irr A$,
$q_i\not\sim_A q_j$ for $i\neq j$ and $l_{r+1},\dots,l_s>1$.
We have:
$a^kb=u^kv(q_1^{l_1}\dots q_r^{l_r})^k
q_{r+1}^{l_{r+1}}\dots q_s^{l_s}q_{s+1}\dots q_n \in R$.
Then $q_1,q_2,\dots,q_s,$ $q_{s+1}\ldots q_n\in R$.
Finally, $a,b\in R$.
\end{proof}

The following proposition shows that if we omit the restriction
$b\in\Sqf R$ in Proposotion~\ref{41},
then we obtain usual factorial closedness.

\begin{pr}
\label{45}
Let $A$ be a domain.
Let $R$ be a subring of~$A$.
The following conditions are equivalent:

\medskip

\noindent
{\rm (i)} \
$\forall\, a,b\in A, \;
ab\in R\setminus\{0\}\Rightarrow a,b\in R$,

\medskip

\noindent
{\rm (ii)} \
$\forall\, a,b\in A, \;
a^2b\in R\setminus\{0\}\Rightarrow a,b\in R$,

\medskip

\noindent
{\rm (iii)} \
$\forall\, a,b\in A, \;
a^3b\in R\setminus\{0\}\Rightarrow a,b\in R$,

\medskip

\noindent
{\rm (iv)} \
$\forall\, a,b\in A, \;
(a^2b\in R\setminus\{0\}\vee a^3b\in R\setminus\{0\})
\Rightarrow a,b\in R$,

\medskip

\noindent
{\rm (v)} \
$\forall\, a,b\in A, \; \forall\, k>1 \;
(a^kb\in R\setminus\{0\}\Rightarrow a,b\in R)$,

\medskip

\noindent
{\rm (vi)} \
$\forall\, a,b\in A, \; \forall\, k\geqslant 1 \;
(a^kb\in R\setminus\{0\}\Rightarrow a,b\in R)$.
\end{pr}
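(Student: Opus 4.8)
My plan is to prove all six conditions equivalent by establishing a small cycle of implications together with a few cheap reductions, exploiting the fact that every condition is of the shape ``$a^k b\in R\setminus\{0\}\Rightarrow a,b\in R$'' for various exponents. The trivial direction is that (vi) is the strongest statement and immediately implies each of (i), (ii), (iii), (v); likewise (v) implies both (ii) and (iii), and either of (ii), (iii) implies (iv). So the whole content lies in proving that one of the weak single-exponent hypotheses forces the full factorial closedness (i), after which everything collapses. Concretely I would aim to close the loop by proving $\text{(i)}\Rightarrow\text{(vi)}$, $\text{(iv)}\Rightarrow\text{(i)}$, and enough of the middle implications to route every condition into this loop.

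First I would record the purely formal reductions. Note that (i) is the case $k=1$ of (vi), and that (vi) restricted to $k>1$ is exactly (v), while (v) specializes to (ii) at $k=2$ and to (iii) at $k=3$; each of (ii) and (iii) in turn yields the disjunctive statement (iv). Thus the cheap implications give
$$\text{(vi)}\Rightarrow\text{(v)}\Rightarrow\text{(ii)}\Rightarrow\text{(iv)},\qquad \text{(v)}\Rightarrow\text{(iii)}\Rightarrow\text{(iv)},\qquad \text{(vi)}\Rightarrow\text{(i)}.$$
It therefore suffices to prove the two ``upgrading'' implications $\text{(i)}\Rightarrow\text{(vi)}$ and $\text{(iv)}\Rightarrow\text{(i)}$; chaining these with the formal reductions closes every gap.

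For $\text{(i)}\Rightarrow\text{(vi)}$ I would argue by a direct induction on $k$. Given $a^kb\in R\setminus\{0\}$ with $k\geqslant 1$, write $a^kb=a\cdot(a^{k-1}b)$; since $A$ is a domain the product $a^{k-1}b$ is a nonzero element of $A$, so (i) applied to the factorization $a\cdot(a^{k-1}b)$ yields $a\in R$ and $a^{k-1}b\in R\setminus\{0\}$, and the inductive hypothesis on the smaller exponent then gives $b\in R$ (the base case $k=1$ being (i) itself). For $\text{(iv)}\Rightarrow\text{(i)}$ the idea is to manufacture, from a bare product $ab\in R\setminus\{0\}$, an expression of the form $u^2 v$ or $u^3 v$ landing in $R$. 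The clean trick is to multiply by a suitable power: from $ab\in R$ we get $a^2b^2=(ab)^2\in R\setminus\{0\}$ and $a^3b^3=(ab)^3\in R\setminus\{0\}$, so writing $a^2b^2=a^2\cdot b^2$ we may apply (iv) (the $a^2b$-branch, with $b$ replaced by $b^2$) to conclude $a\in R$ and $b^2\in R$; symmetrically, or by a second application, $a^3b^3=a^3\cdot b^3$ gives $b\in R$. Once both $a,b\in R$ are obtained the implication (i) holds.

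The main obstacle I anticipate is $\text{(iv)}\Rightarrow\text{(i)}$: the disjunctive hypothesis (iv) only guarantees the conclusion when the exponent on one factor is exactly $2$ or exactly $3$, so I cannot directly feed it an arbitrary product, and I must be careful that the auxiliary element I put in the ``$b$'' slot is a genuine element of $A$ (it is, since $A$ is a domain and powers stay in $A$) and that the relevant product is nonzero (which follows from $ab\neq 0$ in a domain). The delicate point is choosing the factorization so that exactly the privileged exponent $2$ or $3$ appears on the factor we want to extract; squaring and cubing $ab$ is the natural device, and I would double-check that one extraction of $a$ and one extraction of $b$ suffice rather than needing to iterate. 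Everything else is the domain property ``$xy\neq 0$ whenever $x,y\neq 0$'' applied repeatedly, so no deep machinery beyond Proposition~\ref{45}'s own hypotheses is required.
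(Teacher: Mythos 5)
Your plan has a genuine logical flaw in how condition (iv) is handled. Since the hypothesis of (iv) is a disjunction, (iv) is equivalent to the conjunction of (ii) and (iii): an implication $(P\vee Q)\Rightarrow C$ is the same as $(P\Rightarrow C)\wedge(Q\Rightarrow C)$. Hence the trivial implications are $\text{(iv)}\Rightarrow\text{(ii)}$ and $\text{(iv)}\Rightarrow\text{(iii)}$ --- exactly the opposite of what you claim. Your ``cheap'' edges $\text{(ii)}\Rightarrow\text{(iv)}$ and $\text{(iii)}\Rightarrow\text{(iv)}$ are not formal specializations at all; each amounts to showing that (ii) implies (iii) (or conversely), which is part of the substance of the proposition. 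Once these false edges are deleted, your diagram falls apart: conditions (ii) and (iii) have no outgoing implications, so their equivalence with (i), (v), (vi) is never established, and nothing in your plan derives (iv) from anything else.

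The good news is that your computations already contain the repair. Your argument for ``$\text{(iv)}\Rightarrow\text{(i)}$'' only ever invokes the $a^2b$-branch of (iv): from $ab\in R\setminus\{0\}$ you pass to $a^2b^2=a^2\cdot b^2$, extract $a\in R$ and $b^2\in R$, and then get $b\in R$ by the symmetric application to $b^2\cdot a^2$ (or, as the paper does, by applying the hypothesis once more to $b^2\cdot 1$). So what you have really proved is $\text{(ii)}\Rightarrow\text{(i)}$, and the cubing variant proves $\text{(iii)}\Rightarrow\text{(i)}$ --- precisely the two substantive implications in the paper's proof; your treatment of them, and of $\text{(i)}\Rightarrow\text{(vi)}$ by induction, is correct. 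The fix is purely structural: take as the obvious implications $\text{(vi)}\Rightarrow\text{(v)}$, $\text{(v)}\Rightarrow\text{(iv)}$ (since (v) covers both $k=2$ and $k=3$), $\text{(iv)}\Rightarrow\text{(ii)}$ and $\text{(iv)}\Rightarrow\text{(iii)}$, and close the cycle with your proofs of $\text{(ii)}\Rightarrow\text{(i)}$, $\text{(iii)}\Rightarrow\text{(i)}$ and $\text{(i)}\Rightarrow\text{(vi)}$; this is exactly the paper's structure.
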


\begin{proof}
$\text{\rm (ii)}\Rightarrow\text{\rm (i)}$ \
Assume that {\rm (ii)} holds.
Let $ab\in R\setminus \{0\}$ for some $a,b\in A$.
Then $a^2b^2\in R\setminus \{0\}$.
By the assumption we have $a,b^2\in R$.
Again using assumption for $b^2\cdot 1\in R$ we have $b\in R$.
Finally, $a,b\in R$.

\medskip

\noindent
$\text{\rm (iii)}\Rightarrow\text{\rm (i)}$ \
Assume that {\rm (iii)} holds.
Let $ab\in R\setminus \{0\}$ for some $a,b\in A$.
Then $a^3b^3\in R\setminus \{0\}$.
By the assumption we have $a,b^3\in R$.
Again using assumption for $b^3\cdot 1$ we have $b\in R$.
Finally, $a,b\in R$.

\medskip

\noindent
Implications:
$\text{\rm (vi)}\Rightarrow\text{\rm (v)}$,
$\text{\rm (v)}\Rightarrow\text{\rm (iv)}$,
$\text{\rm (iv)}\Rightarrow\text{\rm (ii)}$,
$\text{\rm (iv)}\Rightarrow\text{\rm (iii)}$
are obvious.

\medskip

\noindent
$\text{\rm (i)}\Rightarrow\text{\rm (vi)}$ \
Simple induction.
\end{proof}

The last proposition is motivated by a modification
of Proposition~\ref{45}.

\begin{pr}
\label{46}
Let $A$ be a domain.
Let $R$ be a subring of~$A$.
The following conditions are equivalent:

\medskip

\noindent
{\rm (i)} \
$\forall\, a,b\in A, \;
ab,a^2b\in R\setminus\{0\}\Rightarrow a,b\in R$,

\medskip

\noindent
{\rm (ii)} \
$\forall\, a,b\in A, \;
a^2b,a^3b\in R\setminus\{0\}\Rightarrow a,b\in R$,

\medskip

\noindent
{\rm (iii)} \
$\forall\, a,b\in A \; (\forall\, k\geqslant 1 \;
a^kb\in R\setminus\{0\})\Rightarrow a,b\in R$,

\medskip

\noindent
{\rm (iv)} \
$\forall\, a,b\in A \; (\forall k>1 \;
a^kb\in R\setminus\{0\})\Rightarrow a,b\in R$,

\medskip

\noindent
{\rm (v)} \
$\forall\, a,b\in A \;
(\exists\, k_0, \; \forall\, k\geqslant k_0, \;
a^kb\in R\setminus\{0\})\Rightarrow a,b\in R$.
\end{pr}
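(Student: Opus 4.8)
The plan is to prove the five conditions equivalent by closing the single cycle
$$\mathrm{(i)}\Rightarrow\mathrm{(ii)}\Rightarrow\mathrm{(iii)}\Rightarrow\mathrm{(iv)}\Rightarrow\mathrm{(v)}\Rightarrow\mathrm{(i)}.$$
Of these, $\mathrm{(ii)}\Rightarrow\mathrm{(iii)}$ is free: the hypothesis of $\mathrm{(iii)}$, namely $a^kb\in R\setminus\{0\}$ for all $k\geqslant 1$, contains in particular $a^2b,a^3b\in R\setminus\{0\}$, which is the hypothesis of $\mathrm{(ii)}$. The whole substance lies in the four genuine implications, and all of them rest on one device: rather than testing the assumed condition on the pair $(a,b)$ itself, I would apply it to an auxiliary pair in which a factor of $a$ has been shifted into the second entry. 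Throughout one uses that $A$ is a domain, so that from $a^2b\neq 0$ one gets $a,b,ab\neq 0$ and every monomial in $a,b$ written below is nonzero.

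For the two bootstrap steps $\mathrm{(i)}\Rightarrow\mathrm{(ii)}$ and $\mathrm{(iii)}\Rightarrow\mathrm{(iv)}$ the pattern is identical. Assuming $\mathrm{(i)}$ and $a^2b,a^3b\in R\setminus\{0\}$, I first apply $\mathrm{(i)}$ to the pair $(a,ab)$: its products $a\cdot ab=a^2b$ and $a^2\cdot ab=a^3b$ lie in $R\setminus\{0\}$ by assumption, so $\mathrm{(i)}$ yields $a\in R$ and $ab\in R$. Feeding $ab\in R$ back in, a second application of $\mathrm{(i)}$ to $(a,b)$, now with $ab,a^2b\in R\setminus\{0\}$, gives $b\in R$. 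The step $\mathrm{(iii)}\Rightarrow\mathrm{(iv)}$ is the same in spirit: from $a^kb\in R\setminus\{0\}$ for all $k>1$, condition $\mathrm{(iii)}$ applied to $(a,ab)$ produces $a,ab\in R$ because $a^k(ab)=a^{k+1}b\in R\setminus\{0\}$ for every $k\geqslant 1$; then $\mathrm{(iii)}$ applied to $(a,b)$ finishes, the missing value $k=1$ being supplied by $ab\in R$.

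For $\mathrm{(iv)}\Rightarrow\mathrm{(v)}$ I would peel powers of $a$ downward. Assume $\mathrm{(iv)}$ and a tail $a^kb\in R\setminus\{0\}$ for all $k\geqslant k_0$. Applying $\mathrm{(iv)}$ to the pair $(a,a^{k_0-1}b)$ is legitimate, since $a^k\cdot a^{k_0-1}b=a^{k+k_0-1}b$ lies in the tail for every $k>1$; it delivers $a\in R$ together with $a^{k_0-1}b\in R$, thereby extending the tail to all $k\geqslant k_0-1$. A finite downward induction on the exponent then lowers the threshold to $0$ and deposits $b=a^0b\in R$, while $a\in R$ has been obtained at the first step.

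The hard part is the closing implication $\mathrm{(v)}\Rightarrow\mathrm{(i)}$, which must extract the strongest statement, a conclusion drawn from only two prescribed products, from the weakest hypothesis, an entire tail. The key realization is that one should \emph{not} try to recover $a$ by dividing $a^2b$ by $ab$; instead one places the unknown in the second slot and manufactures the required tail out of the two given elements. Concretely, assume $\mathrm{(v)}$ and suppose $ab,a^2b\in R\setminus\{0\}$. To obtain $a\in R$, apply $\mathrm{(v)}$ to the pair $(ab,a)$: for every $k\geqslant 1$,
$$(ab)^k\cdot a=a^{k+1}b^{k}=(ab)^{k-1}(a^2b)\in R\setminus\{0\}$$
is a product of the given elements of $R$, so the tail hypothesis holds and $\mathrm{(v)}$ forces $a\in R$. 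To obtain $b\in R$, apply $\mathrm{(v)}$ to the pair $\big((ab)(a^2b),\,b\big)=(a^3b^2,b)$: for every $k\geqslant 1$,
$$(a^3b^2)^k\cdot b=a^{3k}b^{2k+1}=(ab)^{k+2}(a^2b)^{k-1}\in R\setminus\{0\},$$
again a product of $ab$ and $a^2b$, whence $\mathrm{(v)}$ yields $b\in R$. This closes the cycle. The only points to verify are that the exponents $k-1$ in the two factorisations are nonnegative for all $k\geqslant 1$, and that no term vanishes, both of which are immediate since $A$ is a domain and $a,b\neq 0$.
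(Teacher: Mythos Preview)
Your proof is correct. All five implications hold as you describe; the two factorisations in the closing step $(\mathrm{v})\Rightarrow(\mathrm{i})$,
\[
(ab)^k a=(ab)^{k-1}(a^2b),\qquad (a^3b^2)^k b=(ab)^{k+2}(a^2b)^{k-1},
\]
check out for all $k\geqslant 1$, and the nonvanishing of every monomial follows from $A$ being a domain.

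Your route differs from the paper's. Rather than a single cycle, the paper takes $(\mathrm{iii})$ as a hub: it shows directly that $(\mathrm{iii})\Rightarrow(\mathrm{i}),(\mathrm{ii}),(\mathrm{v})$ by proving a propagation lemma --- under $(\mathrm{iii})$, two consecutive values $a^kb,a^{k+1}b\in R\setminus\{0\}$ force the neighbours $a^{k+2}b$ and (for $k>1$) $a^{k-1}b$ into $R$ as well, via the identity
\[
(a^kb)^l\cdot a^{k+2}b=(a^{k+1}b)^2\cdot(a^kb)^{l-1}
\]
applied to the auxiliary pair $(a^kb,a^{k+2}b)$. All remaining implications $(\mathrm{i})\Rightarrow(\mathrm{iii})$, $(\mathrm{ii})\Rightarrow(\mathrm{iv})$, $(\mathrm{iv})\Rightarrow(\mathrm{iii})$, $(\mathrm{v})\Rightarrow(\mathrm{iv})$ are then trivial inclusions of hypotheses. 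The paper's argument is thus more uniform --- one identity does all the work --- whereas your cycle requires a different auxiliary pair at each stage, with the genuinely new content concentrated in $(\mathrm{v})\Rightarrow(\mathrm{i})$. Both approaches exploit the same underlying idea of shifting powers of $a$ between the two slots, but yours makes the logical flow of the equivalence linear, while the paper's isolates the single mechanism that drives everything.
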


\begin{proof}
$\text{\rm (iii)}\Rightarrow\text{\rm (i)},\text{\rm (ii)},
\text{\rm (v)}$ \
Assume that condition (iii) holds.
It is enough to prove that if $a^kb, a^{k+1}b\in R\setminus\{0\}$
for some $a,b\in A$, then $a^{k+2}b\in R\setminus\{0\}$
and (if $k>1$) $a^{k-1}b\in R\setminus\{0\}$.

\medskip

Assume that $a^kb, a^{k+1}b\in R\setminus\{0\}$, where $a,b\in A$.
Since $$(a^kb)^l\cdot a^{k+2}b=(a^{k+1}b)^2\cdot (a^kb)^{l-1}\in R$$
holds for every $l\geqslant 1$, from the assumption we obtain
$a^{k+2}b\in R$.
Moreover, if $k>1$, since
$$(a^{k+1}b)^l\cdot a^{k-1}b=(a^kb)^2\cdot (a^{k+1}b)^{l-1}\in R$$
also holds for every $l\geqslant 1$, we infer also $a^{k-1}b\in R$.

\medskip

\noindent
Implications: $\text{\rm (v)}\Rightarrow\text{\rm (iv)}$,
$\text{\rm (iv)}\Rightarrow\text{\rm (iii)}$,
$\text{\rm (i)}\Rightarrow\text{\rm (iii)}$,
$\text{\rm (ii)}\Rightarrow\text{\rm (iv)}$ are obvious.
\end{proof}

\end{document}